\font \eightrm=cmr8
\newcommand{\nc}{\newcommand}
\nc\smsc{0.8}
\nc\delete[1]{}
\nc{\mlabel}[1]{\label{#1}}  
\nc{\mcite}[1]{\cite{#1}}  
\nc{\mref}[1]{\ref{#1}}  
\nc{\mbibitem}[1]{\bibitem{#1}} 
\nc{\mlabel}[1]{\label{#1}  
{\hfill \hspace{1cm}{\small\tt{{\ }\hfill(#1)}}}}
\nc{\mcite}[1]{\cite{#1}{\small{\tt{{\ }(#1)}}}}  
\nc{\mref}[1]{\ref{#1}{{\tt{{\ }(#1)}}}}  
\nc{\mbibitem}[1]{\bibitem[\bf #1]{#1}} 
\nc{\mop}[1]{\mathop{\hbox {\rm #1} }}
\nc{\smop}[1]{\mathop{\hbox {\eightrm #1} }}
\nc{\mopl}[1]{\mathop{\hbox {\rm #1} }\limits}
\nc{\smopl}[1]{\mathop{\hbox {\eightrm #1} }\limits}
\def \restr#1{\mathstrut_{\textstyle |}\raise-8pt\hbox{$\scriptstyle #1$}}
\def \srestr#1{\mathstrut_{\scriptstyle |}\hbox to
  -1.5pt{}\raise-4pt\hbox{$\scriptscriptstyle #1$}}
\nc{\wt}{\widetilde}
\nc{\wh}{\widehat}
\newtheorem{theorem}{Theorem}[section]
\newtheorem{definition}{Definition}[section]
\newtheorem{proposition}{Proposition}[section]
\newtheorem{lemma}{Lemma}[section]
\newtheorem{remark}{Remark}[section]
\newtheorem{example}{Example}[section]
\newtheorem{examples}{Examples}[section]
\numberwithin{equation}{section}
\newcommand\alphlist{a,b,c,d,e,f,g,h,i,j,k,l,m,n,o,p,q,r,s,t,u,v,w,x,y,z}
\newcommand\Alphlist{A,B,C,D,E,F,G,H,I,J,K,L,M,N,O,P,Q,R,S,T,U,V,W,X,Y,Z}
\newcommand\getcmds[3]{\expandafter\newcommand\csname #2#1\endcsname{#3{#1}}}
\alphlist\do{\expandafter\getcmds\expandafter{\x}{frak}{\mathfrak}}
\Alphlist\do{\expandafter\getcmds\expandafter{\x}{frak}{\mathfrak}}
\nc{\bfk}{{\bf k}}
\nc{\sha}{\shuffle}
\nc{\id}{\mathrm{id}}
\nc{\Id}{\mathrm{Id}}
\nc{\lbar}[1]{\overline{#1}}
\nc{\ot}{\otimes}
\nc{\dep}{\mathrm{dep}}
\nc{\ver}{\mathrm{ver}}
\nc{\tred}[1]{\textcolor{red}{#1}} \nc{\tgreen}[1]{\textcolor{green}{#1}}
\nc{\tblue}[1]{\textcolor{blue}{#1}} \nc{\tpurple}[1]{\textcolor{purple}{#1}}
\nc{\tcyan}[1]{\textcolor{cyan}{#1}} 
\nc{\tblk}[1]{\textcolor{black}{#1}}
\nc{\li}[1]{\tpurple{\underline{Li:}#1 }}
\nc{\liadd}[1]{\tpurple{#1}}
\nc{\xing}[1]{\tblue{\underline{Xing:}#1 }}
\nc{\yuan}[1]{\tred{\underline{Yuan:}#1 }}
\nc{\markus}[1]{\tred{\underline{Markus:} #1}}
\nc{\dominique}[1]{\tpurple{\underline{Dominique: }#1 }}
\long\def\ignore#1{}
\tikzset{
baseon/.style={baseline={($(#1)+(0,-0.58ex)$)}},
baseon/.default=current bounding box.center,
every picture/.style=baseon,
lst/.style={},
dst/.style={circle,inner sep=1pt,outer sep=0pt,fill,draw,dst2},
dst2/.style={fill=white},
ddst/.style={diamond,draw,inner sep=1pt},
eest/.style={ellipse,draw,inner sep=1pt,minimum size=2ex},
}
\def\zzz#1`#2...#3`#4...#5`#6@{%
--++(#1)
node[dst,label={#5:$#6$},name=#2]{}
node[midway,\hbox{Aut}o,#3]{$#4$}
}
\def\ddd#1`#2`#3@{+(#1)node[ddst,name=#2]{$#3$}}
\def\eee#1`#2`#3@{+(#1)node[eest,name=#2]{$#3$}}
\def\xxx#1`#2@{node[midway,\hbox{Aut}o,inner sep=1pt,#1]{$#2$}}
\def\pp#1`#2`#3@{node[dst,label={#2:$#3$},pos=#1]{}}
\def\oo#1`#2`#3@{\path (o) node[dst,label={#2:$#3$},name=o,#1]{};}
\def\eoo#1`#2@{\node[eest,name=o,#1] at (o) {$#2$};}
\newif\ifshowjdq
\newcommand\setXXclip[3]{%
\def\XXheight{#1}\def\XXdepth{#2}\def\XXwidth{#3}}
\newcommand\simra{\mathrel{\mathpalette\@verra\sim}}
\def\@verra#1#2{\lower.5\p@\vbox{\lineskiplimit\maxdimen \lineskip-.5\p@
\ialign{$\m@th#1\hfil##\hfil$\crcr#2\crcr\rightarrow\crcr}}}
\nc{\dnx}{\Delta_n A} \nc{\dx}{\Delta A} \nc{\dgp}{{\rm deg_{P}}}
\nc{\dgt}{{\rm deg_{T}}} \nc{\dg}{{\rm deg}} \nc{\ida}{ID($A$)} \nc{\tu}{\tilde{u}} \nc{\tv}{\tilde{v}}
\nc{\nr}{\calr_n} \nc{\nz}{\calz_n} \nc{\fun}{\cala_{n,d}}
 \nc{\fbase}{\calb} \nc{\LF}{\mathrm{RF}} \nc{\FFA}{\mathrm{LF}} \nc{\irr}{\mathrm{Irr}}
 \nc{\result}{\bfk\mathrm{Irr}(S_n)}  \nc{\I}{I_{\mathrm{ID},n}^0}
 \nc{\nrs}{\calr_n^\star} \nc{\ii}{\mathrm{I}} \nc{\iii}{\mathrm{II}}
\nc{\intl}{{\rm int}}\nc{\ws}[1]{{#1}}\nc{\deleted}[1]{\delete{#1}}\nc{\plas}{placements\xspace}
\nc{\bim}[1]{#1}  \nc{\shaop}{\sha_{\Omega}^{+}}  \nc{\shao}{\sha_{\Omega}}
\nc{\bbim}[2]{#1 #2} \nc{\bbbim}[2]{#1,\, #2} \nc{\RBF}{{\rm RBF}}
\nc{\frb}{F_{\RB}} \nc{\shaf}{\ssha_{\tiny{\Omega}}} \nc{\sham}{\blacklozenge_{\tiny{\Omega}}}
\nc{\lf}{\lfloor} \nc{\rf}{\rfloor} \nc{\shan}{\ssha_{\lambda}}
\nc{\rlex}{{\rm {lex}}} \nc{\bb}{\Box} \nc{\ra}{\rightarrow}
\nc{\e}{{\rm {e}}}
\nc{\DDF}{\mathrm{DD}(X,\,\Omega)}\nc{\DTF}{\mathrm{DT}(X,\,\Omega)} \nc{\DT}{\mathrm{DT}'(\Omega,\,V)}
\nc{\bra}{\mathrm{bra}} \nc{\bre}{\mathrm{bre}}
\nc{\dec}{\mathrm{dec}} \nc{\blacklozengew}{\blacklozenge_{w}}
\nc{\type}{\mathrm{type}}
\nc\calt{\cal{T}(X,\,\Omega)} \nc\caltn{\cal{T}_n(X,\,\Omega)}
\nc\calta{\cal{T}_0(X,\,\Omega)}
\nc\caltb{\cal{T}_1(X,\,\Omega)}
\nc\caltc{\cal{T}_2(X,\,\Omega)}
\nc\caltd{\cal{T}_3(X,\,\Omega)}
\nc\caltm{\cal{T}_m(X,\,\Omega)}
\nc\caltx{\cal{T}(X)}
\nc\calf{\cal{F}(X,\,\Omega)}
\nc\fram{\frak{M}(\Omega,\, X)}
\nc\shaw{\sha^{NC}_w(\Omega,\, X)}
\nc\dw{\blacklozenge_w} \nc\dl{\blacklozenge_\ell}
\nc\shal{\sha^{NC}_\ell(X,\, \Omega)} \nc\shav{\sha^{NC}_w(\Omega,\, V)} \nc\shat{\sha^{NC,1}_w(\Omega,\, T^{+}(V))}
\nc{\cfo}{\cal{F}(X,\,\Omega)}
\nc{\sh}{\rm{Sh}}
\nc{\lar}{\varinjlim}
\def\cxo#1#2;{\cal{#1}#2\XO}
\nc\lrf[2]{B_{#2}^+(#1)}
\nc{\fd}{\mathrm{\text{typed angularly decorated planar rooted trees}}}
\nc{\rb}{\mathrm{RBFWs}} \nc{\dfw}{\mathrm{DFW{(X)}}} \nc{\tfw}{\mathrm{TFW{(X)}}}
\nc{\tfv}{\mathrm{TFW{(V)}}}
\def\Ve#1,#2,#3;{\vee_{#1,\,(#2,\,#3)}}
\def\bigv#1;#2;#3;{\bigvee\nolimits_{#1}^{#2;\,#3}}
\nc\rjt[2]{\mathrel{\mathop{\longrightarrow}\limits^{#1\hfill}_{\hfill#2}}}
\nc{\pl}{\cal{PLF}}
\nc{\tr}{\cal{RTF}}
\nc{\im}{\mathrm{Im}}
\nc{\ff}{\cal{F}_\Omega}
\nc{\tm}{T_\Omega}
\nc{\calp}{\cal{P}}
\nc\dd{\@ifnextchar'{\ddA}{\ddB}}
\def\ddA'#1;{\lhd'_{#1\,}}
\def\ddB#1;{\lhd_{#1\,}}
\nc{\pbt}{\mathrm{PBT}}
\nc{\ad}{\mathrm{ad}}
\begin{document}

\title[The topological quandles up to four elements]{The topological quandles up to four elements}
\thispagestyle{empty}
\author{Mohamed Ayadi}
\address{Laboratoire de Math\'ematiques Blaise Pascal,
CNRS--Universit\'e Clermont-Auvergne,
3 place Vasar\'ely, CS 60026,
F63178 Aubi\`ere, France, and University of Sfax, Faculty of Sciences of Sfax,
LAMHA, route de Soukra,
3038 Sfax, Tunisia.}
\email{mohamedayadi763763@gmail.com}
%
%

\tikzset{
			stdNode/.style={rounded corners, draw, align=right},
			greenRed/.style={stdNode, top color=green, bottom color=red},
			blueRed/.style={stdNode, top color=blue, bottom color=red}
		} 
		
	\begin{abstract}
The finite topological  quandles can be represented as
$n \times n$ matrices, recently defined by S. Nelson and C. Wong.
In this paper, we first study the finite topological quandles and we show how to use these matrices to distinguish all isomorphism classes of finite topological quandles for a given cardinality $n$. As an application, we classify finite topological quandles with up to 4 elements.
	\end{abstract}
	
\keywords{quandles, finite topological spaces .}
\subjclass[2020]{57K12, 16T05 .}
\maketitle
\tableofcontents
	\section{Introduction}
A quandle is a set $Q$ with a binary operation $\lhd : Q \times Q \longrightarrow Q$ satisfying the three
axioms
\begin{itemize}
    \item (i) for every $a \in Q$, we have $a \lhd a = a$,
    \item (ii) for every pair $a, b \in Q$ there is a unique $c \in Q$ such that $a = c \lhd b$, and
    \item (iii) for every $a, b, c \in Q$, we have $(a \lhd b) \lhd c = (a \lhd c) \lhd (b \lhd c)$.
    \\
\end{itemize}
As an example for $(G, \circ)$ a group and $\lhd: G\times G \longrightarrow G$ the operation defined by $x\lhd y=x\circ y\circ x^{-1}$, for all $x, y \in G$, then $Q$ is a quandle.\\

More on quandles can be found in \cite{L. Pedro and R. Dennis, B. Ho and S. Nelson, DN. Yetter}.\\

A quasi-poset is a pairs $(X, \le)$, where $X$ is a set and $\le$ a quasi-order on $X$, that is to say a transitive and reflexive relation on $X$.
Recall (see e.g. \cite{acg16, acg10}) that a topology on a finite set $X$ is given by the
family $\mathcal{T}$ of open subsets of $X$, subject to the three following axioms:
	\begin{itemize}
		\item $\hbox{\o}\in\mathcal{T}$, $X\in\mathcal{T}$,
		\item The union of (a finite number of) open subsets is an open subset,
		\item The intersection of a finite number of open subsets is an open subset.
	\end{itemize}


By Alexandroff’s theorem \cite{acg.Alex, acg..12}, for any finite set $X$, there is a bijection between topologies on $X$ and quasi-orders on $X$. 
Any topology $\mathcal{T}$ on $X$ defines a quasi-order denoted by $\leq_{\mathcal{T}}$ on $X$:
	\begin{equation*}
	x\leq_{\mathcal{T}}y\Longleftrightarrow \hbox{ any open subset containing $x$ also contains $y$}.
	\end{equation*}
	Conversely, any quasi-order $\leq$ on $X$ defines a topology $\mathcal{T}_{\leq}$ given by its upper ideals, i.e., subsets $Y\subset X$ such that ($y\in Y$ and $y\leq z$) $\implies z\in Y$. Both operations are inverse to each other:
	\begin{equation*}
	\leq_{\mathcal{T}_{\leq}}= \leq\hspace*{1cm} and \hspace*{1cm} \mathcal{T}_{\leq_{\mathcal{T}}}=\mathcal{T}.
	\end{equation*}
		Hence there is a natural bijection between topologies and quasi-orders on
	a finite set $X$.
	Any quasi-order (hence any topology $\mathcal{T}$ ) on $X$ gives rise to an equivalence relation:
	\begin{equation}
	x \sim_{\mathcal{T}}y\Longleftrightarrow \left( x\leq_{\mathcal{T}}y \hbox{ and } y\leq_{\mathcal{T}}x \right).
	\end{equation}
	A finite topological space $(X, \le)$ will be represented by the Hasse diagram of the quotient $X/\sim $, where $\sim$ is the equivalence relation defined above. Each vertex is drawn as a bubble in which  all elements of the same equivalence class are represented by points.\\
	 More on finite topological spaces can be found in \cite{Moh. twisted, Moh. Doubling, acg15}.\\
	 


Let $(Q, \le)$ be a topological space equipped with a continuous map $\mu : Q \times Q \longrightarrow Q$ , denoted by  $\mu(a, b) = a \lhd b$, such that for every $b\in Q$ the mapping $a\mapsto a \lhd b$ is a homeomorphism of $(Q,\le)$. The space $Q$ (together with the map $\mu$ ) is called a topological quandle \cite{Rubinsztein} if it satisfies for all $a, b, c \in Q$
\begin{itemize}
    \item (i) $(a \lhd b) \lhd c=(a \lhd c) \lhd (b \lhd c)$,
    \item (ii) $a \lhd a=a$.
\end{itemize}

Let $(Q, \lhd)$ and $(Q', \lhd')$ be two topological quandles.
A continuous map $\phi : Q \longrightarrow Q'$ is called a topological quandle homomorphism if $\phi(a\lhd b) = \phi(a) \lhd' \phi(b)$, for all $a, b \in Q$.\\

The paper is organized as follows. We recall in Section \ref{matrix of a finite quandle} the method of B. Ho and S. Nelson  \cite{B. Ho and S. Nelson} to describe finite quandles with up to 5 elements,
and we also recall in section \ref{orbit} how S. Nelson and C-Y. Wong in \cite{Nelson and Wong} prove that the decomposition of a finite quandle into orbits coincides with our notion of decomposition into Q-complemented subquandles.\\
 
 In section \ref{results} we prove that, if $Q=Q_1 \amalg Q_2 \amalg \cdot \cdot \cdot \amalg Q_n$ is a finite quandle, written in its orbit decomposition, and if $\mathcal{T}=(Q, \le)$ is a topological space such as $\mathcal{T}_{|Q_i}$ is the coarse topology on $Q_i$ for all $i\in [n]$, then $\mathcal{T}$ is $Q$-compatible. Then we apply this result to find the finite topological quandles with up to 4 elements.

\section{The matrix of a finite quandle}\label{matrix of a finite quandle}
Let $Q=\{x_1, x_2, . . . , x_n\}$ be a finite quandle with $n$ elements. We define the
matrix of $Q$, denoted $M_Q$, to be the matrix whose entry in row $i$ column $j$ is $x_i \lhd x_j$:\\
$$M_Q=	\begin{bmatrix}
x_1\lhd x_1 & x_1\lhd x_2 &...& x_1\lhd x_n\\
x_2\lhd x_1 & x_2\lhd x_2 &...& x_2\lhd x_n\\
.&.&...&.\\
.&.&...&.\\
.&.&...&.\\
x_n\lhd x_1 & x_n\lhd x_2 &...& x_n\lhd x_n
\end{bmatrix}
$$
\begin{examples}\cite{B. Ho and S. Nelson}
    Let $Q=\{a, b, c \}$, the Quandle matrices for quandles of order 3 are, up to permutations of $Q$:
   $$\begin{bmatrix}
a & a & a\\
b & b & b\\
c & c & c
\end{bmatrix}
, \hspace{1cm}
\begin{bmatrix}
a & c & b\\
c & b & a\\
b & a & c
\end{bmatrix}
, \hspace{1cm}
\begin{bmatrix}
a & a & a\\
c & b & b\\
b & c & c
\end{bmatrix}
$$ 
Let $Q=\{a, b, c, d \}$, the Quandle matrices for quandles of order 4 are, up to permutations of $Q$:
   $$\begin{bmatrix}
a & a & a & a\\
b & b & b & b\\
c & c & c & c\\
d & d & d & d
\end{bmatrix}
, \hspace{0.3cm}
\begin{bmatrix}
a & a & a & a\\
b & b & b & c\\
c & c & c & b\\
d & d & d & d
\end{bmatrix}
, \hspace{0.3cm}
\begin{bmatrix}
a & a & a & b\\
b & b & b & c\\
c & c & c & a\\
d & d & d & d
\end{bmatrix}
, \hspace{0.3cm}
\begin{bmatrix}
a & a & b & b\\
b & b & a & a\\
c & c & c & c\\
d & d & d & d
\end{bmatrix}
, $$
$$
\begin{bmatrix}
a & a & a & a\\
b & b & d & c\\
c & d & c & b\\
d & c & b & d
\end{bmatrix}
, \hspace{0.3cm}
\begin{bmatrix}
a & a & b & b\\
b & b & a & a\\
d & d & c & c\\
c & c & d & d
\end{bmatrix}
, \hspace{0.3cm}
\begin{bmatrix}
a & d & b & c\\
c & b & d & a\\
d & a & c & b\\
b & c & a & d
\end{bmatrix}
$$

\end{examples}	
\begin{definition}
Let $Q$ be a quandle. A subquandle $X \subset Q$ is a subset of $Q$ which is itself a quandle under $\lhd$. Let $Q$ be a quandle and $X \subset Q$ a subquandle. We say that $X$ is complemented in $Q$ or $Q$-complemented if $Q\backslash X$ is a subquandle of $Q$. 
\end{definition}
\begin{theorem}\cite{Nelson and Wong}
Let $Q$ be a finite quandle. Then $Q$ may be written as
$$Q = Q_1 \amalg Q_2 \amalg \cdot \cdot \cdot \amalg Q_n,$$
where every $Q_i$ is $Q$-complemented and no proper subquandle of any $Q_i$ is $Q$-complemented. This
decomposition is well-defined up to isomorphism; if $Q \approx Q'$, then in the decompositions
$Q = Q_1 \amalg Q_2 \amalg \cdot \cdot \cdot \amalg Q_n,$ and $Q' = Q'_1 \amalg Q'_2 \amalg \cdot \cdot \cdot \amalg Q'_m,$
we have $n = m$ and (after reordering if necessary), $Q_i=Q'_j$.
\end{theorem}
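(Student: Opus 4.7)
The plan is to identify the $Q$-complemented subquandles with the unions of orbits of $Q$ under its group of inner automorphisms, so that the stated decomposition will become the canonical orbit decomposition and both existence and uniqueness fall out at once. For each $b\in Q$, let $S_b:Q\to Q$ be defined by $S_b(a)=a\lhd b$. Axiom (ii) is exactly the statement that $S_b$ is a bijection of $Q$, and axiom (iii) is equivalent to the relation $S_c\,S_b=S_{b\lhd c}\,S_c$. Hence the inner automorphism group $\mathrm{Inn}(Q)=\langle S_b : b\in Q\rangle$ acts on $Q$, and its orbits $O_1,\ldots,O_n$ partition $Q$.

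The technical heart is the following claim: a subset $X\subset Q$ is $Q$-complemented if and only if $X$ is a union of $\mathrm{Inn}(Q)$-orbits. For the easy direction, suppose $X$ is such a union. Then $S_b(X)\subset X$ and $S_b^{-1}(X)\subset X$ for every $b\in Q$; in particular, for $a,b\in X$ we have $a\lhd b\in X$ and the unique $c\in Q$ with $c\lhd b=a$ is $S_b^{-1}(a)\in X$, so $X$ is a subquandle. The same argument applies to $Q\setminus X$, which is also a union of orbits. For the converse, fix $b\in Q$. Applying axiom (ii) inside the subquandle $Q\setminus X$ shows that $S_b$ restricts to a surjection $Q\setminus X\to Q\setminus X$, which is bijective by finiteness of $Q$. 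Since $S_b$ is already a bijection of all of $Q$, comparing images in the decomposition $Q=X\amalg(Q\setminus X)$ forces $S_b$ to restrict to a bijection $X\to X$ as well. Thus $X$ is $\mathrm{Inn}(Q)$-invariant, i.e.\ a union of orbits.

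Existence of the decomposition is now immediate: set $Q_i:=O_i$. Each $Q_i$ is $Q$-complemented by the claim. If some $Y\subsetneq Q_i$ were a proper $Q$-complemented subquandle, the claim would force $Y$ to be a union of orbits strictly inside the single orbit $Q_i$, which is impossible. For uniqueness, let $Q=Q_1\amalg\cdots\amalg Q_n$ be any decomposition satisfying the hypotheses. Each $Q_i$ is a union of orbits, and the minimality assumption forces it to be a single orbit. Thus every such decomposition coincides with the orbit decomposition, which is intrinsic. If $\varphi:Q\to Q'$ is a quandle isomorphism, it intertwines the inner automorphism groups, hence maps orbits bijectively to orbits; this gives $n=m$ and, after reordering, $\varphi(Q_i)=Q'_{\sigma(i)}$ for some permutation $\sigma$, so in particular $Q_i\cong Q'_{\sigma(i)}$.

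The only subtle point is the forward direction of the characterization in the second paragraph: namely, that the subquandle structure alone on both $X$ and $Q\setminus X$ is enough to force each $S_b$ to preserve $X$, with no a priori assumption of $\mathrm{Inn}(Q)$-invariance. The argument hinges on two ingredients that must both be present: the global bijectivity of $S_b$ on $Q$ supplied by axiom (ii), and the finiteness of $Q$ used to upgrade surjectivity of $S_b|_{Q\setminus X}$ to bijectivity. Everything else in the proof is bookkeeping on top of this equivalence.
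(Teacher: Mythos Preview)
Your argument follows essentially the paper's own route (Section~3): both identify the pieces $Q_i$ with the orbits of $\mathrm{Inn}(Q)$, the paper via Lemma~\ref{lemquandle} and Theorem~\ref{lemquandleorbit} by showing that the smallest $Q$-complemented subquandle $Q_a$ containing $a$ coincides with its orbit $\Omega_a$, and you via the equivalent global characterization ``$Q$-complemented $\Longleftrightarrow$ union of $\mathrm{Inn}(Q)$-orbits''.

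There is one slip in your converse direction. When you write ``applying axiom~(ii) inside the subquandle $Q\setminus X$'' for an \emph{arbitrary} $b\in Q$, that sentence only makes sense for $b\in Q\setminus X$; axiom~(ii) of the subquandle $Q\setminus X$ says nothing about $S_b$ when $b\in X$. The fix is the obvious symmetric case split: if $b\in X$, closure of the subquandle $X$ gives $S_b(X)\subset X$, hence $S_b(X)=X$ by injectivity of $S_b$ on $Q$ together with finiteness, and then $S_b(Q\setminus X)=Q\setminus X$ follows by complementation; if $b\in Q\setminus X$, run the same argument with the roles of $X$ and $Q\setminus X$ exchanged. The paper's proof of Theorem~\ref{lemquandleorbit} makes exactly this case distinction (``for $x\in B$, $R_x$ respects $B$; for $x\in\overline{B}$, $R_x$ respects $\overline{B}$''). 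With that patched, your proof is complete.
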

\section{Reminder on the orbit decomposition}\label{orbit}



\textbf{Notation.} Let $(Q, \lhd)$ be a finite quandle, for $x'\in Q$, we note 

\begin{equation*}
			   \begin{split}
			         R_{x'}:Q &\longrightarrow Q\\
		              x &\longmapsto x\lhd x',
			    \end{split}
			     \ \ \ \ \ \ \ \ \ \ \ \ \ \ \ and \ \ \ \ \ \ \ \ \ \ \ \ \ \ \begin{split}
			         L_{x'}:Q& \longrightarrow Q\\
		         x& \longmapsto x'\lhd x.
			    \end{split}
			\end{equation*}

	\begin{remark}
	$(Q, \mathcal{T})$ is a finite topological quandle if and only if, ($R_{x'}$ is an homeomorphism and $L_{x'}$ is a continuous map, for all $x'\in Q$) if and only if, for all $x, y, x', y' \in X$, if $x\le x'$ and $y\le y'$, we obtain $x\lhd y\le x'\lhd y'$.                
	\end{remark}
 \begin{lemma}\label{lemquandle}
Let $(Q, \lhd)$ be a finite quandle, the intersection of two $Q$-complemented subquandles is also $Q$-complemented.
\end{lemma}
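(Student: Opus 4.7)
The plan is to first prove an auxiliary observation that makes the lemma almost trivial: if $X \subset Q$ is a $Q$-complemented subquandle, then for \emph{every} $b \in Q$ (and not merely $b \in X$) the right translation $R_b$ satisfies $R_b(X) = X$. If $b \in X$, closure of the subquandle $X$ under $\lhd$ gives $R_b(X) \subseteq X$, and since $X$ is finite and $R_b$ is injective on $Q$, this inclusion is an equality. If instead $b \in Q \setminus X$, the very same argument applied to the subquandle $Q \setminus X$ yields $R_b(Q \setminus X) = Q \setminus X$, and then the bijectivity of $R_b$ on $Q$ forces $R_b(X) = X$ as well. In short: a $Q$-complemented subquandle is globally stable under every $R_b$.

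With this in hand, let $X_1$ and $X_2$ be two $Q$-complemented subquandles and set $X = X_1 \cap X_2$. For every $b \in Q$ the observation gives $R_b(X_1) = X_1$ and $R_b(X_2) = X_2$, hence $R_b(X) = R_b(X_1) \cap R_b(X_2) = X$. In particular $X$ is closed under $\lhd$ (take $b \in X$), and by finiteness each $R_b|_X$ is a bijection of $X$, so $X$ is itself a quandle, i.e.\ a subquandle of $Q$.

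It remains to show that $Q \setminus X = (Q \setminus X_1) \cup (Q \setminus X_2)$ is also a subquandle. But the same observation applied to each $X_i$ shows that $R_b$ stabilises $Q \setminus X_i$ for every $b \in Q$, hence stabilises their union; as before, finiteness turns stability under $R_b$ into the subquandle property. This gives that $X_1 \cap X_2$ is $Q$-complemented, which is the desired conclusion.

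The only real content of the argument is the opening remark that global stability under every $R_b$ follows from $Q$-complementedness; once that is established, the closure under intersection is purely formal. I do not expect any serious obstacle, but one should be careful to use the finiteness of $Q$ both to upgrade closure under $\lhd$ into bijectivity of $R_b$ on the relevant subset, and to justify that a finite subset of $Q$ stable under all $R_b$ is automatically a subquandle.
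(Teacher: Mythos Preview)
Your proof is correct and rests on the same underlying idea as the paper's---namely, that the right translations $R_b$ globally stabilise any $Q$-complemented subquandle---but you isolate this fact as a standalone observation at the outset, whereas the paper only uses it implicitly. Concretely, the paper verifies that $Q_1\cap Q_2$ is a subquandle by checking the axioms directly, and then handles the complement via a case analysis on the four-block partition $Q=(Q_1\cap Q_2)\amalg(Q_1\cap\overline{Q_2})\amalg(\overline{Q_1}\cap Q_2)\amalg(\overline{Q_1}\cap\overline{Q_2})$, arguing that $R_a$ respects each block when $a$ lies in the complement. Your formulation bypasses this case split entirely: once global $R_b$-stability is established, stability under intersection, complement, and union is automatic, and the subquandle property follows by finiteness. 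The gain is a shorter and more transparent argument; the paper's block decomposition, on the other hand, makes the orbit-like structure a bit more visible, which is natural given that the lemma is used immediately afterwards to analyse orbits.
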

\begin{proof}
Let $(Q, \lhd)$ be a finite quandle and let $Q_1$, $Q_2$ be two $Q$-complemented sub-quandles.\\
It is clear that the binary operation $\lhd : (Q_1\cap Q_2) \times (Q_1\cap Q_2) \longrightarrow Q_1\cap Q_2$ satisfies the two axioms $(i)$ and $(iii)$ of the definition of quandle. \\
 For $x, y\in Q_1\cap Q_2$, it exists $z\in Q$ such as $x=R_y(z)$, where $R_y:Q\longrightarrow Q$ defined by $R_y(z)=z\lhd y$. i.e., $z=R^{-1}_y(x)$. Since $x, y\in Q_1\cap Q_2$ and the map $R_y$ is a bijection on $Q_1$ (resp. on $Q_2$), so we get $z\in Q_1\cap Q_2$. Hence $\lhd : (Q_1\cap Q_2) \times (Q_1\cap Q_2) \longrightarrow Q_1\cap Q_2$ satisfies the axiom $(ii)$. So $Q_1\cap Q_2$ is a sub-quandle.\\
 On the other hand:
 $$Q=(Q_1\cap Q_2)\amalg (Q_1\cap \overline{Q_2})\amalg (\overline{Q_1}\cap Q_2)\amalg (\overline{Q_1}\cap \overline{Q_2}), \hbox{ where } \overline{Q_1}=Q\backslash Q_1 \hbox{ and } \overline{Q_2}=Q\backslash Q_2.$$
Let $a\in \overline{Q_1\cap Q_2}$, so we have three possible cases; $a\in Q_1\cap \overline{Q_2}$ or $a\in \overline{Q_1}\cap Q_2$ or $a\in \overline{Q_1}\cap \overline{Q_2}$.
 \begin{itemize}
     \item If $a\in \overline{Q_1}\cap \overline{Q_2}$, we obtain 
     \begin{itemize}
         \item $R_a: \overline{Q_1}\cap \overline{Q_2}\longmapsto \overline{Q_1}\cap \overline{Q_2}$ is a bijection.
         \item $R_a: \overline{Q_1}\longmapsto \overline{Q_1}$ is a bijection.
          \item $R_a: \overline{Q_2}\longmapsto \overline{Q_2}$ is a bijection.
           \item $R_a: Q\longmapsto Q$ is a bijection.
     \end{itemize}
     Then $R_a$ respects all four blocks.
  \item If $a\in \overline{Q_1}\cap Q_2$ or $a\in Q_1\cap \overline{Q_2}$; similarly.
 \end{itemize}
 Hence $R_a$ respects $\overline{Q_1\cap Q_2}$, so we then deduce that $Q_1\cap Q_2$ is a $Q$-complemented subquandles.\\
 Then the finite intersection of $Q$-complemented subquandles is also $Q$-complemented.
\end{proof}
\textbf{Notation:}
Let $(Q, \lhd)$ be a finite quandle. For $a\in Q$, we not
$$Q_a=\bigcap \limits_{\underset{a\in Q' }{Q'\hbox{\tiny{ is Q-complemented }}}}Q'
\hspace{1cm}\hbox{ and }\hspace{1cm} 
\Omega_a=\{ b\in Q, a\sim b  \},$$ 
where $\sim$ is the transitive closure of the relation $\tilde{\mathcal{R}}$ defined by:
$$x\tilde{\mathcal{R}}y\Longleftrightarrow \hbox{ it exists } z\in Q \hbox{ such as } (x=y\lhd z  \hbox{ or }  y=x\lhd z).$$
i.e., for all $a, b\in Q,$
$$a\sim b \hbox{ if and only if, it exists } c_1,..., c_n\in Q \hbox{ such as } a\tilde{\mathcal{R}}c_1...\tilde{\mathcal{R}}c_n\tilde{\mathcal{R}}b.$$
\begin{theorem}\cite{Nelson and Wong}\label{lemquandleorbit}
Let $(Q, \lhd)$ be a finite quandle then $\Omega_a$ and $Q_a$ defined above are equal for any $a\in Q$.
\end{theorem}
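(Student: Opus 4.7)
\bigskip
\noindent\textbf{Proof plan.}
The strategy is a double inclusion $\Omega_a\subseteq Q_a$ and $Q_a\subseteq \Omega_a$. Lemma~\ref{lemquandle} tells us that the family of $Q$-complemented subquandles is stable under intersection, so $Q_a$ is itself a $Q$-complemented subquandle containing $a$; we therefore look for the smallest such object, and the plan is to identify it with $\Omega_a$.

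For the inclusion $\Omega_a\subseteq Q_a$, I need to show that every $Q$-complemented subquandle $Q'$ that contains $a$ actually contains the whole equivalence class $\Omega_a$. The key intermediate step, which I will isolate as a small claim, is that for a $Q$-complemented subquandle $Q'$ and \emph{any} $z\in Q$ (not just $z\in Q'$), the right translation $R_z$ restricts to a bijection of $Q'$. This comes from a bijection-counting argument on the finite set $Q$: since $Q'$ and $Q\setminus Q'$ are both subquandles, $R_z$ preserves whichever of the two blocks contains $z$; bijectivity of $R_z$ on $Q$ then forces it to preserve the other block as well. Once this is available, both clauses of the relation $\tilde{\mathcal{R}}$ preserve $Q'$: if $a\in Q'$ and $c=a\lhd z$, then $c=R_z(a)\in Q'$; if $a=c\lhd z$, then $c=R_z^{-1}(a)\in Q'$ since $R_z$ is a bijection of $Q'$. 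A straightforward induction on the length of an $\tilde{\mathcal{R}}$-chain from $a$ to $b$ then gives $b\in Q'$, whence $\Omega_a\subseteq Q'$ and, by intersecting, $\Omega_a\subseteq Q_a$.

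For the reverse inclusion $Q_a\subseteq \Omega_a$, the plan is to prove that $\Omega_a$ itself is a $Q$-complemented subquandle of $Q$; since it obviously contains $a$, this is enough. To see that $\Omega_a$ is a subquandle, take $x,y\in\Omega_a$: then $x\lhd y\mathrel{\tilde{\mathcal{R}}}x$ gives $x\lhd y\sim x\sim a$, and the unique $c$ with $c\lhd y=x$ satisfies $c\mathrel{\tilde{\mathcal{R}}}x$ so $c\in\Omega_a$. To see that $Q\setminus\Omega_a$ is a subquandle, take $x,y\notin\Omega_a$; if $x\lhd y$ belonged to $\Omega_a$ then $x\lhd y\sim a$, but $x\lhd y\mathrel{\tilde{\mathcal{R}}}x$ would force $x\sim a$, a contradiction, and the same contrapositive argument handles the unique $c$ with $c\lhd y=x$.

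The only slightly delicate step is the auxiliary claim that $R_z$ preserves a $Q$-complemented subquandle $Q'$ \emph{for every} $z\in Q$; once that is in hand the rest is essentially a bookkeeping exercise with the transitive closure $\sim$. Everything else reduces to verifying, case by case, that the two clauses ``$x=y\lhd z$'' and ``$y=x\lhd z$'' defining $\tilde{\mathcal{R}}$ are respected by a $Q$-complemented subquandle and by the complement of $\Omega_a$, which is straightforward.
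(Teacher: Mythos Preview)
Your proposal is correct and follows essentially the same approach as the paper: both proofs proceed by double inclusion, showing on one side that $\Omega_a$ is a $Q$-complemented subquandle (hence $Q_a\subseteq\Omega_a$), and on the other side that every $Q$-complemented subquandle $B\ni a$ is stable under $R_z$ and $R_z^{-1}$ for all $z\in Q$ (hence absorbs the $\tilde{\mathcal{R}}$-chains, so $\Omega_a\subseteq Q_a$). Your counting argument for the auxiliary claim that $R_z$ preserves both blocks of $Q=Q'\amalg(Q\setminus Q')$ is exactly the mechanism the paper uses, stated a bit more explicitly.
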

\begin{proof}
Let $(Q, \lhd)$ be a finite quandle and $a\in Q$, according to the Lemma \ref{lemquandle}, $Q_a$ is a $Q$-complemented subquandle.\\
 - It is clear that the binary operation $\lhd: \Omega_a \times \Omega_a \longrightarrow \Omega_a$ satisfies the two axioms $(i)$ and $(iii)$ of the definition of quandle.\\
 Let $x, y\in \Omega_a$, then there exists a unique $z \in Q$ such that $x=z\lhd y$, and hence $x\tilde{\mathcal{R}}z$, hence $z\in \Omega_x=\Omega_a$. Hence, the map $R_x: \Omega_a \longrightarrow \Omega_a$ defined by $R_x(y)=y\lhd x$ is a bijection. So $\Omega_a$ is a sub-quandle of $Q$.\\
Moreover the binary operation $\lhd: Q\backslash \Omega_a \times Q \backslash\Omega_a \longrightarrow Q\backslash\Omega_a$ satisfies the two axioms $(i)$ and $(iii)$ of the definition of quandle. And for all $x, y\in Q\backslash \Omega_a$ there exists $z\in Q$ such that $x=z\lhd y$, hence $x\tilde{\mathcal{R}}z$, then $z\in Q\backslash \Omega_a$ necessarily, because otherwise then $x\in \Omega_a$, which is absurd. Hence, the map $R_x :Q\backslash \Omega_a \longrightarrow Q\backslash\Omega_a$ defined by $R_x(y)=y\lhd x$ is a bijection. So $Q\backslash\Omega_a$ is a sub quandle of $Q$. then $\Omega_a$ is $Q$-complemented.\\
 - Since $Q_a$ is the smallest complemented sub-quandle containing $a$, we obtain that $Q_a\subseteq \Omega_a$.\\
 - It remains to show that $\Omega_a \subseteq Q_a$, let $B$ be a sub-quandle $Q$-complemented containing $a$. For $x\in B$, then $R_x$ respects $B$. Moreover for $x\in \overline{B}$, $R_x$ respects $\overline{B}$.
  So for all $x\in Q$, $R_x$ and $R^{-1}_x$ respect $B$. And since $\Omega=\{P_1...P_ka, \hbox{ with } P_j \hbox{ equal to } R_{x_j} \hbox{ or } R^{-1}_{x_j}, x_j\in Q \}$, then $\Omega_a\subset B$.\\
Hence $\Omega_a\subset Q_a$. Consequently $\Omega_a=Q_a$.
\end{proof}

\section{Results}\label{results}
In this section we prove that, if $Q=Q_1 \amalg Q_2 \amalg \cdot \cdot \cdot \amalg Q_n$ is a finite quandle and let $\mathcal{T}=(Q, \le)$ is a topological space such as for all $i\in [n]$, $\mathcal{T}_{|Q_i}$ is the coarse topology on $Q_i$, then $\mathcal{T}$ is $Q$-compatible. From this  result I find the topological quandles of 3 and 4 elements.
\subsection{The topologies of orbits of finite quandle}

	\begin{proposition}\label{proposition quandle}
	Let $Q$ be a finite quandle, then the discrete topology and the coarse topology are $Q$-compatible.
	\end{proposition}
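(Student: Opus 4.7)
The plan is to invoke the remark immediately preceding the proposition, which characterises $Q$-compatibility of a topology $\mathcal{T}$ via the Alexandroff quasi-order $\leq_\mathcal{T}$: namely, $(Q,\mathcal{T})$ is a topological quandle if and only if the binary operation $\lhd$ is monotone in both arguments, i.e.\ $x\leq_\mathcal{T} x'$ and $y\leq_\mathcal{T} y'$ imply $x\lhd y\leq_\mathcal{T} x'\lhd y'$. Once this criterion is in hand, both cases reduce to inspecting the corresponding quasi-orders on $Q$.

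First I would handle the discrete topology. Under Alexandroff's correspondence, the discrete topology on $Q$ corresponds to the quasi-order of equality, so $x\leq_\mathcal{T} x'$ forces $x=x'$. Consequently, from $x=x'$ and $y=y'$ one obtains $x\lhd y=x'\lhd y'$, whence trivially $x\lhd y\leq_\mathcal{T} x'\lhd y'$. Equivalently, one can simply observe that every map between discrete spaces is continuous, so each $R_{x'}$ is automatically a homeomorphism and each $L_{x'}$ is continuous.

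Next I would treat the coarse topology $\{\varnothing,Q\}$. Its associated quasi-order is the total relation in which any two elements satisfy $x\leq_\mathcal{T} x'$, so the implication $(x\leq_\mathcal{T} x'\text{ and }y\leq_\mathcal{T} y')\Rightarrow x\lhd y\leq_\mathcal{T} x'\lhd y'$ holds vacuously since the conclusion is always true. Alternatively, any map from any topological space into a space carrying the coarse topology is continuous, so again the continuity of $L_{x'}$ is immediate, and $R_{x'}$ is a homeomorphism because it is a bijection between two copies of $(Q,\mathcal{T})$ and both it and its inverse are continuous.

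There is no real obstacle here: the work is entirely done by the equivalence stated in the remark, and the proposition is a direct verification in the two extreme cases. The only thing worth being careful about is that the author's convention for ``coarse topology'' coincides with the indiscrete topology, so that its quasi-order is indeed the total relation rather than the antichain; this is the convention consistent with Alexandroff's theorem as quoted.
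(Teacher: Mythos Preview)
Your proposal is correct and follows essentially the same approach as the paper: both invoke the monotonicity criterion from the preceding remark and then verify it by noting that in the discrete case $x\le_\mathcal{T} x'$ forces $x=x'$, while in the coarse case any two elements are $\le_\mathcal{T}$-related so the conclusion is automatic. Your additional alternative arguments via direct continuity are extra but do not change the route.
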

	\begin{proof}
Let $Q=(X, \lhd)$ be a finite quandle. If $\mathcal{T}$ is the discrete topology, then for all $x, x', y, y' \in X$, if $x\le_{\mathcal{T}} x'$ and $y\le_{\mathcal{T}} y'$, then $x=x'$ and $y=y'$, so $x\lhd y\le_{\mathcal{T}}x'\lhd y'$, hence $\mathcal{T}$ is $Q$-compatible.\\
If $\mathcal{T}$ is the coarse topology, then for all $x, y \in X$, $x\sim_{\mathcal{T}} y$, so for all $x, x', y, y'\in X$, $x\le_{\mathcal{T}} x'$ and $y\le_{\mathcal{T}} y'$ and $x\lhd y\le_{\mathcal{T}}x'\lhd y'$. Hence $\mathcal{T}$ is a $Q$-compatible.\\
		\end{proof}
	\textbf{Notation.} Let $Q=Q_1 \amalg Q_2 \amalg \cdot \cdot \cdot \amalg Q_n$ be a finite quandle written in its orbit decomposition (see Theorem \ref{lemquandleorbit}). We denote by $\mathcal{T}_Q=\mathcal{T}_{Q_1}\cdot \cdot \cdot\mathcal{T}_{Q_n}$ the usual product topology of $\mathcal{T}_{Q_i}, i\in [n]$, where $\mathcal{T}_{Q_i}$ is the coarse topology on $Q_i$.
\begin{example}
Let $\lhd:X\times X\longrightarrow X$ the operation of the quandle $Q$ defined by
$M_Q=\begin{bmatrix}
a & a & a\\
c & b & b\\
b & c & c
\end{bmatrix}$. Its orbit decomposition is $Q=Q_1\amalg Q_2$, where $Q_1=\begin{bmatrix}
a
\end{bmatrix}$ and $Q_2=\begin{bmatrix}
 b & b\\
 c & c
\end{bmatrix}$.
In this case 
$$\mathcal{T}_Q=\fcolorbox{white}{white}{
\scalebox{0.7}{
  \begin{picture}(73,47) (183,-169)
    \SetWidth{1.0}
    \SetColor{Black}
    \Vertex(194,-154){2}
    \Vertex(204,-154){2}
    \Text(191,-150)[lb]{\Large{\Black{$b$}}}
    \Text(202,-150)[lb]{\Large{\Black{$c$}}}
    \Arc(200,-152)(15.811,215,575)
    \Vertex(174,-154){2}
    \Text(174,-150)[lb]{\Large{\Black{$a$}}}
  \end{picture}
}}$$
\end{example}
	\begin{theorem}\label{theorem quandle}
	  Let $Q=Q_1 \amalg Q_2 \amalg \cdot \cdot \cdot \amalg Q_n$ be a finite quandle written in its orbit decomposition, and let $\mathcal{T}$ be a topology on $Q$.
	  If for all $i\in [n]$, $\mathcal{T}_{|Q_i}$ is the coarse topology on $Q_i$, then $\mathcal{T}$ is $Q$-compatible.
	\end{theorem}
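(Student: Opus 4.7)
The plan is to reduce the $Q$-compatibility check to a one-sided comparison that uses only the given relation $x \leq_{\mathcal{T}} x'$, by exploiting the fact that the quandle operation cannot move the first argument outside of its orbit. The main input is the observation that for every $a \in Q_i$ and every $b \in Q$, one has $a \lhd b \in Q_i$. This follows immediately from the definition of the relation $\tilde{\mathcal{R}}$ used to build $\Omega_a$ in Section \ref{orbit}: taking $z = b$ in the clause $y = x \lhd z$ gives $a \,\tilde{\mathcal{R}}\, (a \lhd b)$, so $a \lhd b \in \Omega_a$, and Theorem \ref{lemquandleorbit} identifies $\Omega_a$ with the orbit $Q_i$ of $a$ in the decomposition.

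With this in hand, I would fix $x \leq_{\mathcal{T}} x'$ and $y \leq_{\mathcal{T}} y'$ in $Q$, and let $Q_i$ and $Q_j$ be the orbits containing $x$ and $x'$ respectively. By the previous step, $x \lhd y \in Q_i$ and $x' \lhd y' \in Q_j$. The assumption that $\mathcal{T}_{|Q_i}$ is the coarse topology translates, through Alexandroff's correspondence, into the statement that any two elements of $Q_i$ are mutually $\sim_{\mathcal{T}}$-equivalent; in particular $x \lhd y \leq_{\mathcal{T}} x$. Applying the same reasoning inside $Q_j$ yields $x' \leq_{\mathcal{T}} x' \lhd y'$.

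To conclude, I would chain the three inequalities by transitivity of the quasi-order associated with $\mathcal{T}$:
\[
x \lhd y \;\leq_{\mathcal{T}}\; x \;\leq_{\mathcal{T}}\; x' \;\leq_{\mathcal{T}}\; x' \lhd y',
\]
which is precisely the $Q$-compatibility relation $x \lhd y \leq_{\mathcal{T}} x' \lhd y'$ recalled in the Remark following the definition of topological quandle. No genuine obstacle is expected: the only substantive content is the orbit-stability lemma, after which the argument reduces to two applications of coarseness and one use of transitivity. As a side remark worth recording in the proof, the hypothesis $y \leq_{\mathcal{T}} y'$ is never actually used, because the second argument is absorbed into the orbit of the first; this reflects the intuition that vertical movement within an orbit costs nothing in a topology for which each orbit is coarse.
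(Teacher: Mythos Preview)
Your proof is correct and follows essentially the same approach as the paper's: both rely on the orbit-stability fact $a\lhd b\in Q_a$ and on coarseness of $\mathcal{T}_{|Q_i}$ to get $\sim_{\mathcal T}$-equivalence within each orbit, then transitivity. The only cosmetic difference is that the paper checks continuity of $L_x$ and $R_x$ separately (for $L_x$ it notes $x\lhd z,\,x\lhd z'\in Q_x$ are equivalent; for $R_x$ it notes $z\le z'$ forces $a\le b$ for all $a\in Q_z,\,b\in Q_{z'}$), whereas you compress both into the single chain $x\lhd y\le_{\mathcal T} x\le_{\mathcal T} x'\le_{\mathcal T} x'\lhd y'$; your observation that the hypothesis $y\le_{\mathcal T} y'$ is never used is accurate and not stated explicitly in the paper.
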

	\begin{proof}
Let $\mathcal{T}$ be a topology on $Q$, such that $\mathcal{T }_{|Q_i}$ is the coarse topology, for all $i\in [ n ]$. For $x\in Q_i$, we note $Q_x=Q_i$.\\
Let $z, z'\in Q$ such that $z\le z'$, then for all $x\in Q$, $L_x(z)=x\lhd z\in Q_x$ and $L_x(z' )=x\lhd z'\in Q_x$. But $\mathcal{T}_{Q_i}$ is the coarse topology, then for all $a, b\in Q_i, a\sim b$, hence $x\lhd z \sim x\lhd z'$. Hence the continuity of $L_x$ for all $x\in Q$ is proven. Moreover $z\le z'$ implies that, for all $a\in Q_z, b\in Q_{z'}$, $a\le b$. In particular, $R_x(z)=z\lhd x\in Q_z$ and $R_x(z')=z'\lhd x\in Q_{z'}$, hence we get $R_x(z)\le R_x(z' )$. Hence $R_x$ is continuous for all $x\in Q$. As $\mathcal{T}$ is finite,
we therefore conclude that $\mathcal{T}$ is $Q$-compatible.
	\end{proof}
I use this theorem to find the topological quandles of 3 and 4 elements below.
\subsection{List of the topological quandles with three elements}
In the three examples above $X=\{a,b,c\}$.

- Let $\lhd:X\times X\longrightarrow X$ the operation of the trivial quandle $Q$ defined by
$M_Q=\begin{bmatrix}
a & a & a\\
b & b & b\\
c & c & c
\end{bmatrix}$. All topologies on $X$ are compatible with this quandle structure.\\
Indeed: let $\mathcal{T}$ be a topology on $X$, for all $x,y\in X$, $x\lhd y=y$, then for all $x',y'\in X$ such that $x\le x'$ and $y\le y'$, we obtain $x\lhd y\le x'\lhd y'$.

- Let $\lhd:X\times X\longrightarrow X$ the quandle structure defined by
$M_Q=\begin{bmatrix}
a & c & b\\
c & b & a\\
b & a & c
\end{bmatrix}$, 
let $\mathcal{T}=(X, \le)$ be a $Q$-compatible topology, if there exists $x\ne y\in \{a, b, c\}$ such that $x\le y$, then $\mathcal{T}$ is the coarse topology. In fact, suppose $a\le b$ we get,\\
$R_a(a)=a\le R_a(b)=c$ and $R_b(a)=c\le R_b(b)=b$ and $R_c(a)=b\le R_c(b)=a$, we therefore obtain, $a\le b$ implies that $a\le c\le b\le a$, hence $\mathcal{T}$ is the coarse topology. Similarly if we change $a, b$ by $x, y\in \{a, b, c\}$, we find that $\mathcal{T}$ is the coarse topology.\\
From Proposition \ref{proposition quandle},we conclude in this case that the topologies on $X$ compatible with the structure $\lhd$ are: the discrete topology and the coarse topology.\\

- Let $\lhd:X\times X\longrightarrow X$ the quandle structure defined by
$M_Q=\begin{bmatrix}
a & a & a\\
c & b & b\\
b & c & c
\end{bmatrix}$,\\
then according to Theorem \ref{theorem quandle}, the four topologies below endowed with $\lhd$ are compatible with the structure of quandle.
$$
\fcolorbox{white}{white}{
\scalebox{0.7}{
  \begin{picture}(65,49) (172,-166)
    \SetWidth{1.0}
    \SetColor{Black}
    \Arc(195,-143)(21.633,56,416)
    \Vertex(183,-151){2}
    \Vertex(194,-151){2}
    \Vertex(204,-152){2}
    \Text(180,-147)[lb]{\Large{\Black{$a$}}}
    \Text(191,-147)[lb]{\Large{\Black{$b$}}}
    \Text(202,-147)[lb]{\Large{\Black{$c$}}}
  \end{picture}
}}
,
\fcolorbox{white}{white}{
\scalebox{0.7}{
  \begin{picture}(54,61) (183,-169)
    \SetWidth{1.0}
    \SetColor{Black}
    \Vertex(194,-142){2}
    \Vertex(204,-142){2}
    \Text(191,-138)[lb]{\Large{\Black{$b$}}}
    \Text(202,-138)[lb]{\Large{\Black{$c$}}}
    \Arc(200,-138)(15.811,215,575)
    \Line(200,-155)(200,-163)
    \Vertex(200,-165){2}
    \Text(204,-168)[lb]{\Large{\Black{$a$}}}
  \end{picture}
}},
\fcolorbox{white}{white}{
\scalebox{0.7}{
  \begin{picture}(54,67) (183,-149)
    \SetWidth{1.0}
    \SetColor{Black}
    \Vertex(194,-136){2}
    \Vertex(204,-136){2}
    \Text(191,-133)[lb]{\Large{\Black{$b$}}}
    \Text(202,-133)[lb]{\Large{\Black{$c$}}}
    \Arc(200,-132)(15.811,215,575)
    \Line(199,-116)(199,-105)
    \Vertex(199,-104){2}
    \Text(196,-100)[lb]{\Large{\Black{$a$}}}
  \end{picture}
}},
\fcolorbox{white}{white}{
\scalebox{0.7}{
  \begin{picture}(73,47) (183,-169)
    \SetWidth{1.0}
    \SetColor{Black}
    \Vertex(194,-154){2}
    \Vertex(204,-154){2}
    \Text(191,-150)[lb]{\Large{\Black{$b$}}}
    \Text(202,-150)[lb]{\Large{\Black{$c$}}}
    \Arc(200,-152)(15.811,215,575)
    \Vertex(223,-154){2}
    \Text(221,-150)[lb]{\Large{\Black{$a$}}}
  \end{picture}
}}
$$
Let $\mathcal{T}=(X, \le)$ be a $Q$-compatible topology, then ($b\sim c$ or $b$ and $c$ are incomparable). Indeed: if $b\le c$, then $R_a(b)=c\le R_a(c)=b$, similarly if $c\le b$, then $R_a(c)=b\le R_a (b)=c$. So the result.\\
Let $\mathcal{T}=(X, \le)$ be a $Q$-compatible topology such that $c$ and $b$ are incomparable then $\mathcal{T}$ is the discrete topology. In fact ; if $a\le b$ then $L_b(a)=c\le b=L_b(b)$, which is absurd, moreover, if $a\le c$ then $L_c(a)=b\le c=L_c(c)$ which is absurd (same if $b\le a$ or $c\le a$). Hence $\mathcal{T}$ is the discrete topology.\\
We conclude that the discrete topology
$\fcolorbox{white}{white}{
\scalebox{0.7}{
  \begin{picture}(58,39) (179,-166)
    \SetWidth{1.0}
    \SetColor{Black}
    \Vertex(183,-161){2}
    \Vertex(194,-161){2}
    \Vertex(204,-162){2}
    \Text(180,-156)[lb]{\Large{\Black{$a$}}}
    \Text(191,-156)[lb]{\Large{\Black{$b$}}}
    \Text(202,-156)[lb]{\Large{\Black{$c$}}}
  \end{picture}
}}$ and the above four topologies are the only $Q$-compatible topologies.
	
\subsection{List of the topological quandles with four elements}

In the seven examples below $X=\{a, b, c, d\}$.\\

- Let $\lhd:X\times X\longrightarrow X$ the quandle structure defined by
$M_Q=\begin{bmatrix}
a & d & b & c\\
c & b & d & a\\
d & a & c & b\\
b & c & a & d
\end{bmatrix}$, the only topologies on $X$ compatible with the quandle structure are the discrete topology and the coarse topology.\\
Indeed: let $(Q, \mathcal{T})$ be a topological quandle different from the discrete topology, then there exists $x\ne y \in \{a, b, c, d\}$, such that $x\le y$.\\
If $a\le b$, then $R_a(a)=a\le c=R_a(b)$, $R_b(a)=d\le b=R_b(b)$, $R_c(a)=b\le d=R_c(b)$, $R_d(a)=c\le a=R_d(b)$, $L_a(a)=a\le d=L_a(b)$, $L_b(a)=c\le b=L_b(b)$, $L_c(a)=d\le a=L_c(b)$ and $L_d(a)=b\le c=L_d(b)$. Then, $a\sim b\sim c\sim d$, i.e., $\mathcal{T}$ is a coarse topology.\\
Same if $a\le c,$ or $ a\le d, $ or $ b\le a, $ or $ b\le c,$ or $  b\le d,$ or $  c\le a,$ or $  c\le b, $ or $ c\le d,$ or $  d\le a,$ or $  d\le b,$ or $ d\le c$, we prove that $\mathcal{T}$ is a coarse topology.\\

- Let $\lhd:X\times X\longrightarrow X$ the quandle structure defined by
$M_Q=\begin{bmatrix}
a & a & b & b\\
b & b & a & a\\
d & d & c & c\\
c & c & d & d
\end{bmatrix}$,\\
If $(Q, \mathcal{T})$ is a topological quandle, then $ (a\sim b \hbox{ and } c\sim d) $
$\hbox{ or } (a\sim b \hbox{ and } c, d \hbox{ are incomparable})$ $ \hbox{ or } (a, b \hbox{ are incomparable and } c\sim d) \hbox{ or } (a, b \hbox{ are incomparable and c, d \hbox{ are incomparable}})$.\\
Indeed, if $a\le b$, then $R_c(a)=b\le a=R_c(b)$. So $a\sim b$.\\
Similarly, if $b\le a$, then $a\sim b$. If $c\le d$, then $c\sim d$. If $d\le c$, then $c\sim d$. \\
By Theorem \ref{theorem quandle}, the three topologies below are Q-compatible.\\
$$
\fcolorbox{white}{white}{
\scalebox{0.7}{
   \begin{picture}(75,36) (133,-200)
    \SetWidth{1.0}
    \SetColor{Black}
    \Arc(144,-189)(10,180,540)
    \Arc(171,-189)(10,180,540)
    \Vertex(141,-194){2}
    \Vertex(147,-194){2}
    \Vertex(167,-193){2}
    \Vertex(175,-193){2}
    \Text(137,-190)[lb]{\Large{\Black{$a$}}}
    \Text(145,-190)[lb]{\Large{\Black{$b$}}}
    \Text(164,-190)[lb]{\Large{\Black{$c$}}}
    \Text(173,-190)[lb]{\Large{\Black{$d$}}}
  \end{picture}
}},
\fcolorbox{white}{white}{
\scalebox{0.7}{
  \begin{picture}(47,62) (133,-174)
    \SetWidth{1.0}
    \SetColor{Black}
    \Arc(144,-163)(10,180,540)
    \Vertex(141,-168){2}
    \Vertex(147,-168){2}
    \Text(137,-165)[lb]{\Large{\Black{$a$}}}
    \Text(145,-165)[lb]{\Large{\Black{$b$}}}
    \Arc(144,-135)(10.05,174,534)
    \Line(145,-153)(144,-145)
    \Vertex(140,-139){2}
    \Vertex(147,-139){2}
    \Text(137,-136)[lb]{\Large{\Black{$c$}}}
    \Text(145,-136)[lb]{\Large{\Black{$d$}}}
  \end{picture}
}},
\fcolorbox{white}{white}{
\scalebox{0.7}{
  \begin{picture}(50,68) (132,-140)
    \SetWidth{1.0}
    \SetColor{Black}
    \Arc(144,-129)(10.05,174,534)
    \Vertex(140,-133){2}
    \Vertex(147,-133){2}
    \Text(137,-131)[lb]{\Large{\Black{$c$}}}
    \Text(145,-131)[lb]{\Large{\Black{$d$}}}
    \Arc(143,-96)(10.198,169,529)
    \Line(144,-119)(144,-106)
    \Vertex(140,-100){2}
    \Vertex(147,-100){2}
    \Text(137,-97)[lb]{\Large{\Black{$a$}}}
    \Text(145,-97)[lb]{\Large{\Black{$b$}}}
  \end{picture}
}}
$$
The disjoint union of the discrete topology on $\{a, b\}$ and the coarse topology on $\{c, d\}$ is Q-compatible and vice versa.\\
Let $\mathcal{T}=(X,\leq)$ be a topological space differs from the coarse topology, and suppose there exists $x\in \{a,b\}$ (resp. $x\in \{c,d\}$) and $y\in \{c,d\}$ (resp. $y\in \{a,b\}$) such that $x\leq y$. Then, $\mathcal{T}$ is not Q-compatible.
Indeed, if $\mathcal{T}$ is a Q-compatible wich $a\le c$ then $L_a(a)=a\le b=L_a(c)$, which is absurd.\\
Conclusion: there are seven topologies Q-compatible (the three topologies above and the 4 below).\\

$$\fcolorbox{white}{white}{
\scalebox{0.7}{
  \begin{picture}(65,49) (172,-166)
    \SetWidth{1.0}
    \SetColor{Black}
    \Vertex(180,-151){2}
    \Vertex(188,-151){2}
    \Vertex(198,-151){2}
    \Vertex(208,-151){2}
    \Text(178,-147)[lb]{\Large{\Black{$a$}}}
    \Text(188,-147)[lb]{\Large{\Black{$b$}}}
    \Text(198,-147)[lb]{\Large{\Black{$c$}}}
     \Text(208,-147)[lb]{\Large{\Black{$d$}}}
  \end{picture}
}}, \fcolorbox{white}{white}{
\scalebox{0.7}{
  \begin{picture}(65,49) (172,-166)
    \SetWidth{1.0}
    \SetColor{Black}
    \Arc(195,-143)(21.633,56,416)
    \Vertex(180,-151){2}
    \Vertex(188,-151){2}
    \Vertex(198,-151){2}
    \Vertex(208,-151){2}
    \Text(178,-147)[lb]{\Large{\Black{$a$}}}
    \Text(188,-147)[lb]{\Large{\Black{$b$}}}
    \Text(198,-147)[lb]{\Large{\Black{$c$}}}
     \Text(208,-147)[lb]{\Large{\Black{$d$}}}
  \end{picture}
}}, \fcolorbox{white}{white}{
\scalebox{0.7}{
   \begin{picture}(75,36) (133,-200)
    \SetWidth{1.0}
    \SetColor{Black}
    \Arc(144,-189)(10,180,540)
    \Vertex(141,-194){2}
    \Vertex(147,-194){2}
    \Vertex(167,-193){2}
    \Vertex(175,-193){2}
    \Text(137,-190)[lb]{\Large{\Black{$a$}}}
    \Text(145,-190)[lb]{\Large{\Black{$b$}}}
    \Text(164,-190)[lb]{\Large{\Black{$c$}}}
    \Text(173,-190)[lb]{\Large{\Black{$d$}}}
  \end{picture}
}}, \fcolorbox{white}{white}{
\scalebox{0.7}{
   \begin{picture}(75,36) (133,-200)
    \SetWidth{1.0}
    \SetColor{Black}
    \Arc(171,-189)(10,180,540)
    \Vertex(141,-194){2}
    \Vertex(147,-194){2}
    \Vertex(167,-193){2}
    \Vertex(175,-193){2}
    \Text(137,-190)[lb]{\Large{\Black{$a$}}}
    \Text(145,-190)[lb]{\Large{\Black{$b$}}}
    \Text(164,-190)[lb]{\Large{\Black{$c$}}}
    \Text(173,-190)[lb]{\Large{\Black{$d$}}}
  \end{picture}
}}$$
- Let $\lhd:X\times X\longrightarrow X$ the quandle structure defined by
$M_Q=\begin{bmatrix}
a & a & a & a\\
b & b & d & c\\
c & d & c & b\\
d & c & b & d
\end{bmatrix}$, then $Q=Q_1\amalg Q_2$, where $Q_1=\begin{bmatrix}
a
\end{bmatrix}$ and $Q_2=\begin{bmatrix}
 b & d &c\\
 d & c &b\\
 c & b &d
\end{bmatrix}$\\
If $(Q, \mathcal{T})$ is a topological quandle, then ($b\sim c\sim d$ or $b, c, d$ are incomparable).\\
If $b\le c$, then $R_a(b)=b\le c=R_a(c)$, $R_b(b)=b\le d=R_b(c)$, $R_c(b)=d\le c=R_c(c)$, $R_d(b)=c\le b=R_d(c)$, $L_a(b)=a\le a=L_a(c)$, $L_b(b)=b\le d=L_b(c)$, $L_c(b)=d\le c=L_c(c)$ and $L_d(b)=c\le b=L_d(c)$.
So $b\sim c$, $b\le d$ and $d\le c$, implies that $b\sim c$ and $b\sim d$, then $b\sim c\sim d$.\\
By Theorem \ref{theorem quandle}, the three topologies below are Q-compatible.\\
$$
\fcolorbox{white}{white}{
\scalebox{0.7}{
  \begin{picture}(61,35) (106,-198)
    \SetWidth{1.0}
    \SetColor{Black}
    \Arc(128,-184)(13.038,122,482)
    \Vertex(122,-189){2}
    \Vertex(128,-189){2}
    \Vertex(134,-189){2}
    \Vertex(109,-189){2}
    \Text(105,-185)[lb]{\Large{\Black{$a$}}}
    \Text(118,-185)[lb]{\Large{\Black{$b$}}}
    \Text(126,-185)[lb]{\Large{\Black{$c$}}}
    \Text(132,-185)[lb]{\Large{\Black{$d$}}}
  \end{picture}
}},
\fcolorbox{white}{white}{
\scalebox{0.7}{
  \begin{picture}(53,70) (114,-163)
    \SetWidth{1.0}
    \SetColor{Black}
    \Arc(128,-149)(13.038,122,482)
    \Vertex(122,-154){2}
    \Vertex(128,-154){2}
    \Vertex(134,-154){2}
    \Text(118,-150)[lb]{\Large{\Black{$b$}}}
    \Text(125,-150)[lb]{\Large{\Black{$c$}}}
    \Text(131,-150)[lb]{\Large{\Black{$d$}}}
    \Vertex(129,-123){2}
    \Line(129,-136)(129,-125)
    \Text(124,-119)[lb]{\Large{\Black{$a$}}}
  \end{picture}
}},
\fcolorbox{white}{white}{
\scalebox{0.7}{
  \begin{picture}(53,48) (114,-198)
    \SetWidth{1.0}
    \SetColor{Black}
    \Arc(128,-171)(13.038,122,482)
    \Vertex(121,-176){2}
    \Vertex(128,-176){2}
    \Vertex(134,-176){2}
    \Text(119,-171)[lb]{\Large{\Black{$b$}}}
    \Text(126,-171)[lb]{\Large{\Black{$c$}}}
    \Text(132,-172)[lb]{\Large{\Black{$d$}}}
    \Vertex(126,-196){2}
    \Line(126,-195)(127,-184)
    \Text(116,-202)[lb]{\Large{\Black{$a$}}}
  \end{picture}
}}
$$
Let $(Q, \mathcal{T})$ be a topological quandle such that, there exists $x\in \{b, c, d\}$ such that $a\le x$ or $x\le a$ then $\mathcal{T}$ is the coarse topology. Indeed: if $a\le b$, then   
$R_a(a)=a\le a=R_a(b)$, $R_b(a)=a\le b=R_b(b)$, $R_c(a)=c\le d=R_c(b)$, $R_d(a)=a\le c=R_d(b)$, $L_a(a)=a\le a=L_a(b)$, $L_b(a)=b\le b=L_b(b)$, $L_c(a)=c\le d=L_c(b)$ and $L_d(a)=d\le c=L_d(b)$. So $a\le d\le a$ and $c\le a\le c$, then $c\sim d$, then $a\sim c$ and $b\sim c\sim d$, so $a\sim b\sim c\sim d$.\\
Conclusion: there are five Q-compatible topologies: the coarse topology, the discrete topology and the three topologies described above. \\

- Let $\lhd:X\times X\longrightarrow X$ the quandle structure defined by
$M_Q=\begin{bmatrix}
a & a & b & b\\
b & b & a & a\\
c & c & c & c\\
d & d & d & d
\end{bmatrix}$, then $Q=Q_1\amalg Q_2\amalg Q_3$, where $Q_1=\begin{bmatrix}
a &a\\
b &b
\end{bmatrix}$, $Q_2=\begin{bmatrix}
 c
\end{bmatrix}$ and $Q_3=\begin{bmatrix}
 d
\end{bmatrix}$\\
If $(Q, \mathcal{T})$ is a topological quandle, then ($a\sim b$ or $a, b$ are incomparable).\\
Indeed, if $a\le b$, then $R_d(a)=b\le a=R_d(b)$, so $a\sim b$. Same thing if $b\le a$ then $a\sim b$.\\
By Theorem \ref{theorem quandle}, any topology that is coarse on the bags $\{a, b\}$, $\{c\}$, $\{d\}$ is Q-compatible.\\
If $a, b$ are incomparable: for all $x\in \{a, b\}$,
\begin{itemize}
    \item if $x\le d$, then $L_a(x)=a\le b=L_a(d)$, which is absurd, 
    \item if $d\le x$, then $L_a(d)=b\le a=L_a(x)$, which is absurd, 
    \item if $x\le c$, then $L_a(x)=a\le b=L_a(c)$, which is absurd, 
    \item if $c\le x$, then $L_a(c)=b\le a=L_a(x)$, which is absurd. 
\end{itemize}
Therefore, if $(Q, \mathcal{T})$ is a topological quandle with $a, b$ are incomparable, it implies that\\  
$\mathcal{T}=\fcolorbox{white}{white}{
\scalebox{0.7}{
  \begin{picture}(48,37) (156,-206)
    \SetWidth{1.0}
    \SetColor{Black}
    \Vertex(160,-202){2}
    \Vertex(160,-192){2}
    \Vertex(166,-202){2}
    \Vertex(174,-202){2}
    \Text(150,-204)[lb]{\Large{\Black{$c$}}}
    \Text(151,-190)[lb]{\Large{\Black{$d$}}}
    \Text(167,-200)[lb]{\Large{\Black{$a$}}}
    \Text(177,-200)[lb]{\Large{\Black{$b$}}}
    \Line(161,-202)(160,-193)
  \end{picture}
}}$, or \hspace{0.1cm} $\mathcal{T}=\fcolorbox{white}{white}{
\scalebox{0.7}{
  \begin{picture}(48,37) (156,-206)
    \SetWidth{1.0}
    \SetColor{Black}
    \Vertex(160,-202){2}
    \Vertex(160,-192){2}
    \Vertex(166,-202){2}
    \Vertex(174,-202){2}
    \Text(150,-204)[lb]{\Large{\Black{$d$}}}
    \Text(153,-190)[lb]{\Large{\Black{$c$}}}
    \Text(167,-200)[lb]{\Large{\Black{$a$}}}
    \Text(178,-200)[lb]{\Large{\Black{$b$}}}
    \Line(161,-202)(160,-193)
  \end{picture}
}}$, or\hspace{0.1cm} $\mathcal{T}=\fcolorbox{white}{white}{
\scalebox{0.7}{
  \begin{picture}(58,39) (179,-166)
    \SetWidth{1.0}
    \SetColor{Black}
    \Vertex(183,-161){2}
    \Vertex(194,-161){2}
    \Vertex(204,-161){2}
    \Vertex(214,-161){2}
    \Text(180,-156)[lb]{\Large{\Black{$a$}}}
    \Text(191,-156)[lb]{\Large{\Black{$b$}}}
    \Text(202,-156)[lb]{\Large{\Black{$c$}}}
    \Text(212,-156)[lb]{\Large{\Black{$d$}}}
  \end{picture}
}}
$, or \hspace{0.1cm} $\mathcal{T}=\hspace{0.3cm}\fcolorbox{white}{white}{
\scalebox{0.7}{
  \begin{picture}(73,47) (183,-169)
    \SetWidth{1.0}
    \SetColor{Black}
    \Vertex(194,-154){2}
    \Vertex(204,-154){2}
    \Text(191,-150)[lb]{\Large{\Black{$c$}}}
    \Text(202,-150)[lb]{\Large{\Black{$d$}}}
    \Arc(200,-152)(15.811,215,575)
    \Vertex(164,-154){2}
    \Vertex(174,-154){2}
    \Text(164,-150)[lb]{\Large{\Black{$a$}}}
    \Text(174,-150)[lb]{\Large{\Black{$b$}}}
  \end{picture}
}}$\\

It is clear that the above topologies are Q-compatible.\\
Conclusion: The Q-compatible topologies are the four topologies above and all topologies on $X$ such that $a$ and $b$ are equivalent.

- Let $\lhd:X\times X\longrightarrow X$ the quandle structure defined by
$M_Q=\begin{bmatrix}
a & a & a & b\\
b & b & b & c\\
c & c & c & a\\
d & d & d & d
\end{bmatrix}$, then $Q=Q_1\amalg Q_2$, where $Q_1=\begin{bmatrix}
 a & a &a\\
 b & b &b\\
 c & c &c
\end{bmatrix}$ and $Q_2=\begin{bmatrix}
d
\end{bmatrix}$.\\
If $(Q, \mathcal{T})$ is a topological quandle, then ($a\sim b\sim c$ or $a, b, c$ are incomparable). Indeed:\\
if $a\le b$, then $R_d(a)=b\le c=R_d(b)$, then $R_d(b)=c\le a=R_d(c)$. So $a\le b\le c\le a$, then $a\sim b\sim c$. Similarly for $x, y\in \{ a, b, c\}$, if $x\le y$ then $a\sim b\sim c$. Then the result.\\
$(Q, \mathcal{T})$ be a topological quandle with $a, b, c$ are incomparable, implies that $\mathcal{T}$ is the discrete topology. Indeed, if there exists $x\in \{a, b, c\}$ such that, ($x\le d$ or $d\le x$), then $\big( L_a(x)=a\le b=L_a(d)$ or $L_a(d)=b\le a=L_a(x)\big)$, which is absurd.\\ 
Conclusion: The Q-compatible topologies are the four topologies below.

$$
\fcolorbox{white}{white}{
\scalebox{0.7}{
  \begin{picture}(65,49) (172,-166)
    \SetWidth{1.0}
    \SetColor{Black}
    \Arc(195,-143)(21.633,56,416)
    \Vertex(180,-151){2}
    \Vertex(188,-151){2}
    \Vertex(198,-151){2}
    \Vertex(208,-151){2}
    \Text(178,-147)[lb]{\Large{\Black{$a$}}}
    \Text(188,-147)[lb]{\Large{\Black{$b$}}}
    \Text(198,-147)[lb]{\Large{\Black{$c$}}}
     \Text(208,-147)[lb]{\Large{\Black{$d$}}}
  \end{picture}
}},
\fcolorbox{white}{white}{
\scalebox{0.7}{
  \begin{picture}(58,39) (179,-166)
    \SetWidth{1.0}
    \SetColor{Black}
    \Vertex(183,-161){2}
    \Vertex(194,-161){2}
    \Vertex(204,-161){2}
    \Vertex(214,-161){2}
    \Text(180,-156)[lb]{\Large{\Black{$a$}}}
    \Text(191,-156)[lb]{\Large{\Black{$b$}}}
    \Text(202,-156)[lb]{\Large{\Black{$c$}}}
    \Text(212,-156)[lb]{\Large{\Black{$d$}}}
  \end{picture}
}},
\fcolorbox{white}{white}{
\scalebox{0.7}{
  \begin{picture}(61,35) (106,-198)
    \SetWidth{1.0}
    \SetColor{Black}
    \Arc(128,-184)(13.038,122,482)
    \Vertex(122,-189){2}
    \Vertex(128,-189){2}
    \Vertex(134,-189){2}
    \Vertex(109,-189){2}
    \Text(105,-185)[lb]{\Large{\Black{$d$}}}
    \Text(118,-185)[lb]{\Large{\Black{$a$}}}
    \Text(125,-185)[lb]{\Large{\Black{$b$}}}
    \Text(132,-185)[lb]{\Large{\Black{$c$}}}
  \end{picture}
}},
\fcolorbox{white}{white}{
\scalebox{0.7}{
  \begin{picture}(53,70) (114,-163)
    \SetWidth{1.0}
    \SetColor{Black}
    \Arc(128,-149)(13.038,122,482)
    \Vertex(122,-154){2}
    \Vertex(128,-154){2}
    \Vertex(134,-154){2}
    \Text(118,-150)[lb]{\Large{\Black{$a$}}}
    \Text(125,-150)[lb]{\Large{\Black{$b$}}}
    \Text(132,-150)[lb]{\Large{\Black{$c$}}}
    \Vertex(129,-123){2}
    \Line(129,-136)(129,-125)
    \Text(124,-119)[lb]{\Large{\Black{$d$}}}
  \end{picture}
}},
\fcolorbox{white}{white}{
\scalebox{0.7}{
  \begin{picture}(53,48) (114,-198)
    \SetWidth{1.0}
    \SetColor{Black}
    \Arc(128,-171)(13.038,122,482)
    \Vertex(121,-176){2}
    \Vertex(128,-176){2}
    \Vertex(134,-176){2}
    \Text(119,-171)[lb]{\Large{\Black{$a$}}}
    \Text(125,-171)[lb]{\Large{\Black{$b$}}}
    \Text(132,-172)[lb]{\Large{\Black{$c$}}}
    \Vertex(126,-196){2}
    \Line(126,-195)(127,-184)
    \Text(116,-202)[lb]{\Large{\Black{$d$}}}
  \end{picture}
}}
$$

- Let $\lhd:X\times X\longrightarrow X$ the quandle structure defined by
$M_Q=\begin{bmatrix}
a & a & a & a\\
b & b & b & c\\
c & c & c & b\\
d & d & d & d
\end{bmatrix}$,
then $Q=Q_1\amalg Q_2\amalg Q_3$, where $Q_1=\begin{bmatrix}
 a
\end{bmatrix}$, $Q_2=\begin{bmatrix}
b &b\\
c &c
\end{bmatrix}$ and $Q_3=\begin{bmatrix}
 d
\end{bmatrix}$.
\\
If$(Q, \mathcal{T})$ is a topological quandle, then ($b\sim c$ or $b, c$ are incomparable). Indeed:\\
if $b\le c$, then $R_d(b)=c\le b=R_d(c)$, then $b\sim c$.\\
By Theorem \ref{theorem quandle}, the topology of the bags $\{a\}$ $\{b, c\}$, $\{d\}$ is a Q-compatible.\\
Let $(Q, \mathcal{T})$ be a topological quandle, then: $b, c$ are incomparable, implies that for all $x\in \{a, b, c\}$, $x$ and $d$ are incomparable. By absurd: if ($x\le d$ or $d\le x$) then $\big(L_b(x)=b\le c=L_b(d)$ or $L_b(d)=c\le d=L_b(x)\big)$, which is absurd. Moreover if ($a\le b$ or $a\le c$) then $\big(R_d(a)=a\le c=R_d(b)$ or $R_d(a)=a\le b=R_d(c)\big)$. We deduce therefore that: $(Q, \mathcal{T})$ be a topological quandle which $b, c$ are incomparable, implies that\\
$$\mathcal{T}=\fcolorbox{white}{white}{
\scalebox{0.7}{
  \begin{picture}(56,37) (150,-205)
    \SetWidth{1.0}
    \SetColor{Black}
    \Vertex(172,-203){2}
    \Vertex(160,-203){2}
    \Vertex(153,-191){2}
    \Vertex(167,-191){2}
    \Line(160,-203)(154,-192)
    \Line(160,-204)(167,-191)
    \Text(150,-207)[lb]{\Large{\Black{$a$}}}
    \Text(177,-207)[lb]{\Large{\Black{$d$}}}
    \Text(147,-189)[lb]{\Large{\Black{$b$}}}
    \Text(171,-191)[lb]{\Large{\Black{$c$}}}
  \end{picture}
}}
\hbox{ or } \hspace{0.6cm}
\mathcal{T}=\fcolorbox{white}{white}{
\scalebox{0.7}{
  \begin{picture}(56,38) (133,-202)
    \SetWidth{1.0}
    \SetColor{Black}
    \Vertex(145,-187){2}
    \Vertex(138,-200){2}
    \Vertex(153,-200){2}
    \Vertex(168,-200){2}
    \Line(138,-199)(145,-188)
    \Line(153,-200)(145,-187)
    \Text(139,-184)[lb]{\Large{\Black{$a$}}}
    \Text(128,-202)[lb]{\Large{\Black{$c$}}}
    \Text(156,-202)[lb]{\Large{\Black{$b$}}}
    \Text(176,-202)[lb]{\Large{\Black{$d$}}}
  \end{picture}
}}
$$
Conclusion: The set of Q-compatible topologies are the topologies such that $\{b\}$ and $\{c\}$ are equivalent and the first two topologies above, and the discrete topology.\\

- Let $\lhd:X\times X\longrightarrow X$ the trivial  quandle structure defined by
$M_Q=\begin{bmatrix}
a & a & a & a\\
b & b & b & b\\
c & c & c & c\\
d & d & d & d
\end{bmatrix},$ all topologies on $X$ are compatible with this quandle structure.\\

\begin{remark}
Let $Q=Q_1 \amalg Q_2 \amalg \cdot \cdot \cdot \amalg Q_n$ be a finite quandle which contains at most four elements, where the $Q_i$ are the orbits and let $\mathcal{T}=(Q, \le)$ be a topological space.
 We noticed that, if $\mathcal{T}$ is Q-compatible then for all $i\in [n]$, $T_{|Q_i}$ is coarse or discrete topology.
\end{remark}
Does this remark remain true for any finite quandles ? This is not the case. Indeed, let $\lhd:X\times X\longrightarrow X$ be the quandle structure defined by $M_Q=\begin{bmatrix}
a & a & a & a & a & a\\
b & b & b & b & b & b\\
d & e & c & c & c & c\\
c & f & d & d & d & d\\
f & c & e & e & e & e\\
e & d & f & f & f & f 
\end{bmatrix}$\\

- In the first step, we prove that Q is well defined. It is clear that the operation $\lhd$ satisfies the conditions (i) and (ii) of the definition of a quandle, moreover we have $R_c=R_d=R_e=R_f=Id$ and:
\begin{align*}
&R_a(c\lhd a)=c=d\lhd a=R_a(c)\lhd R_a(a)\qquad R_a(d\lhd a)=d=c\lhd a=R_a(d)\lhd R_a(a)\\
&R_a(e\lhd a)=e=f\lhd a=R_a(e)\lhd R_a(a)\qquad R_a(f\lhd a)=f=e\lhd a=R_a(f)\lhd R_a(a)\\
&R_a(c\lhd b)=f=d\lhd b=R_a(c)\lhd R_a(b)\qquad R_a(d\lhd b)=e=c\lhd b=R_a(d)\lhd R_a(b)\\
&R_a(e\lhd b)=d=f\lhd b=R_a(e)\lhd R_a(b)\qquad R_a(f\lhd b)=c=e\lhd b=R_a(f)\lhd R_a(f)\\
\end{align*}
and 
\begin{align*}
&R_b(c\lhd a)=f=e\lhd a=R_b(c)\lhd R_b(a)\qquad R_b(d\lhd a)=e=f\lhd a=R_ b(d)\lhd R_b(a)\\
&R_b(e\lhd a)=d=c\lhd a=R_b(e)\lhd R_b(a)\qquad R_b(f\lhd a)=c=d\lhd a=R_b(f)\lhd R_b(a)\\
&R_b(c\lhd b)=c=e\lhd b=R_b(c)\lhd R_b(b)\qquad R_b(d\lhd b)=d=f\lhd b=R_b(d)\lhd R_b(b)\\
&R_b(e\lhd b)=e=c\lhd b=R_b(e)\lhd R_b(b)\qquad R_b(f\lhd b)=f=d\lhd b=R_b(f)\lhd R_b(f)\\
\end{align*}
then $\lhd$ satisfies the condition (iii) of the definition of a quandle. So $(Q, \lhd)$ is a quandle.

- Secondly, if $$\mathcal{T}=\fcolorbox{white}{white}{
\scalebox{0.7}{
  \begin{picture}(101,33) (188,-172)
    \SetWidth{1.0}
    \SetColor{Black}
    \Arc(223,-161)(12.817,146,506)
    \Arc(253,-161)(12.817,146,506)
    \Vertex(190,-167){2}
    \Vertex(199,-167){2}
    \Vertex(218,-167){2}
    \Vertex(226,-167){2}
    \Vertex(249,-167){2}
    \Vertex(257,-167){2}
    \Text(187,-164)[lb]{\Large{\Black{$a$}}}
    \Text(197,-165)[lb]{\Large{\Black{$b$}}}
    \Text(216,-163)[lb]{\Large{\Black{$c$}}}
    \Text(224,-163)[lb]{\Large{\Black{$d$}}}
    \Text(246,-163)[lb]{\Large{\Black{$e$}}}
    \Text(253,-165)[lb]{\Large{\Black{$f$}}}
  \end{picture}
}}$$
we prove that $\mathcal{T}$ is a $Q$-compatible. We have $R_c=R_d=R_e=R_f=Id$ and $L_a(x)=a$ and $L_b(x)=b$ for all $x\in \{ a, b, c, d, e, f \}$, then it suffices to show that $R_a, R_b$ is an isomorphism and $L_c, L_d, L_e, L_f$ is a continuous maps.\\
We have $c\sim d$ and $e\sim f$, we obtain:
$$
R_a(c)=d\sim c=R_a(d), \hspace{2cm} R_b(c)=e\sim d=R_b(d),
$$
$$
R_a(e)=f\sim e=R_a(f), \hspace{2cm}  R_b(e)=c\sim d=R_b(f),
$$


then $R_a$ and $R_b$ is an isomorphism.\\
Moreover\\
$$
L_c(a)=d,\hspace{1cm} L_c(b)=e,\hspace{1cm} L_d(a)=c,\hspace{1cm} L_d(b)=f,\hspace{1cm} L_e(a)=f,
$$
$$
L_e(b)=c,\hspace{1cm} L_e(b)=c,\hspace{1cm} L_f(a)=e,\hspace{1cm} L_f(b)=d,
$$
and $L_c(x)=c, L_d(x)=d, L_e(x)=e$ and $L_f(x)=f$, for all $x\in \{c, d, e, f\}$.
Then $L_x$ is a continuous maps for all $x\in \{a, b, c, d, e, f\}$.\\
So $\mathcal{T}$ is $Q$-compatible.\\


\textbf{Acknowledgements}: The authors would like to thank Mohamed Elhamdadi for useful suggestions and comments.
	
	\textbf{Conflicts of interest}: none

\newpage

\end{document}